\title{Note on the decomposition of states}
\date{\today}
\author{Donghoon Hyeon}
\author{Jaekwang Kim}
\address[DH]{Department of Mathematical Sciences, Seoul National University, Seoul, 151-747, R. O. Korea \\ Tel: +82-2-880-2666, Fax: +82-2-887-4694  }
\email{dhyeon@snu.ac.kr}
\address[JK]{Department of Mathematics, POSTECH, Pohang, Gyungbuk 790-784, R. O. Korea}
\newtheorem{theorem}{Theorem}[section]
\newtheorem{lemma}[theorem]{Lemma}
\newtheorem{prop}[theorem]{Proposition}
\newtheorem{coro}[theorem]{Corollary}
\theoremstyle{definition}
\def\mbb{\mathbb}
\def\tu{\textup}
\def\a{\alpha}
\def\la{\lambda}
\def\m{\mu}
\def\bR{\mbb R}
\def\bP{\mbb P}
\def\bZ{\mbb Z}
\def\SL{{\mathrm SL}}
\def\GL{{\mathrm GL}}
\def\bP{\mathbb P}
\def\bG{\mathbb G}
\def\inj{\hookrightarrow}
\def\spec{\tu{Spec \,}}
\def\hilb{\textup{Hilb}}
\def\inj{\hookrightarrow}
\def\Sym{\mathrm{Sym}}
\def\ii{{\bf in}\,}
\def\cP{\mathcal P}
\def\st{{\mathcal P}}
\newtheorem{remark}[theorem]{Remark}
\begin{document}

\begin{abstract} We derive a sharp decomposition formula for the state polytope of the Hilbert point and the Hilbert-Mumford index of reducible varieties by using the decomposition of characters and basic convex geometry. This proof captures the essence of the decomposition of the state polytopes in general, and considerably simplifies an earlier proof by the authors which uses a careful analysis of initial ideals of reducible varieties.
\end{abstract}

\keywords{Geometric Invariant Theory \and state polytope }
\subjclass[2010]{14L24}

\thanks{The first named author was supported by the Research Resettlement Fund for the new faculty of Seoul National University, and the following grants funded by the government of Korea:
NRF grant 2011-0030044 (SRC-GAIA) and NRF grant NRF-2013R1A1A2010649.}
\maketitle

\section{Introduction \& Preliminaries} \label{S:intro}
In this article, we take a new look at the decomposition formula for state polytopes \cite{HyK} from a more general point of view. We shall work over an algebraically closed field $k$ of characteristic zero. Let $G$ be a linear algebraic group and  $R$ be a maximal torus of it. Let $W$   be a rational representation of $G$ and $w \in W$ be a point. Recall that the {\it state} $\Xi_w(R)$ of $w$ with respect to $R$ is the set of the characters $\chi \in X(R)$ such that $w_\chi \ne 0$. Since $w = \sum_\chi w_\chi$ implies $c w = \sum_\chi c w_\chi$, we have $\Xi_w(R) = \Xi_{c w}(R)$ for any nonzero $c \in k$. Hence we may define the state $\Xi_{p}(R)$ of $p \in \bP(W)$ to be $\Xi_{w}(R)$ for any affine point $w \in W$ over $p$. (We conflate a vector space $W$ with the affine scheme $\spec \Sym(W^*)$.)

We shall be concerned with  the states of Hilbert points of homogeneous ideals.
Let $S = k[x_0,\dots,x_n]$ and let $P(u)$ be a rational polynomial in one variable $u$ and $Q(u) = \binom{u+m}m - P(u)$. If $m$ is at least the Gotzmann number of $P(u)$, then for any saturated homogeneous polynomial $I \subset k[x_0,\dots, x_n]$ whose Hilbert polynomial is $P$, the {\it $m$th Hilbert point}
$[I]_m$ of $I$ is well defined as a $Q(m)$-dimensional subspace  of the degree $m$ piece $S_m$:
 \[
 [I]_m : [I_m \inj S_m] \in \textup{Gr}_{Q(m)}S_m \inj \bP\left(\bigwedge^{Q(m)} S_m \right).
 \]
 The {\it dual $m$th Hilbert point} $[I]_m^*$ is defined by
 \[
 [I]_m^* : [S_m \to S_m/I_m] \in \textup{Gr}^{P(m)}S_m \inj \bP\left(\bigwedge^{P(m)} S_m^*\right).
 \]
 The collection of $m$th Hilbert points form a closed subscheme $\hilb_P\bP(V)$, called the Hilbert scheme, of the Grassmannian. Working out the geometric invariant theory (GIT) of suitable Hilbert schemes give rise to various moduli spaces \cite{GIT}, and our main motivation for the study in this article is the construction of the moduli of curves. The link between GIT and the study of state polytopes is given by the following fundamental observation (numerical criterion): \emph{If $G$ is reductive and $V$ is a rational representation, $v \in V$ is GIT unstable if and only if there is a torus $R$ of $G$ such that the convex hull of $\Xi_v(R)$ does not contain the trivial character. }

 The {\it monomial basis} of $\bigwedge^{Q(m)}S_m$ consists of  $x^\a:=x^{\a(1)}\wedge\cdots\wedge x^{\a(Q(m))}$, the wedge product of $Q(m)$  degree $m$ monomials $x^{\a(i)}$'s. The basis members are also the $R$-weight vectors of $\bigwedge^{Q(m)}S_m$, where $R$ is the maximal torus of $G = \GL(S_1)$ diagonalized by $x_0, \dots, x_n$: Indeed, let $\chi_i$ be the character of $R$ determined by $
 t.x_i=  \chi_i(t)x_i$. Then by letting $t_i$ denote $\chi_i(t)$ and using the usual multivector notation $t^{\gamma} = \prod_{i=0}^n t_i^{\gamma_i}$, $t \in T$, $\gamma \in \bZ^{n+1}$,  we have $t.x^\a = t^{\sum_{i=1}^{Q(m)}\a(i)} x^{\a}$, which means precisely that
 $
\left(\bigwedge^{Q(m)}S_m\right)_{\chi^\a} = k. x^{\alpha}
 $
 where $\chi^\a = \prod_{i=1}^{Q(m)}\prod_{j=0}^n \chi_j^{\a(i)_j}$.

 The Hilbert point $[I]_m$ has a nonzero $x^\a$-coefficient if and only if the degree $m$ monomials other than $x^{\a(1)}, \dots, x^{\a(Q(m))}$ form a $k$-basis of $S_m/I_m$.
 Following \cite{Kempf}, we denote the set of states by $\Xi_{[I]_m}(R)$ and  its convex hull by $\cP_m(I)$. We call $\cP_m(I)$ the {\it $m$th state polytope} of $I$, following \cite{BM}.

\

For any fixed sufficiently large $m$, Bayer and Morrison proved that
\begin{theorem}\label{T:BM}\cite[Theorem~3.1]{BM} There is a canonical bijection between the vertices of $\cP_m(I)$ and the initial ideals $\ii_\prec(I)$ as $\prec$ runs through all term orders on $k[x_0,\dots,x_n]$.
\end{theorem}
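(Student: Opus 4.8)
The plan is to reduce the combinatorics of the vertices to the maximization of linear functionals over the state, and then to match each maximizer with an initial ideal through a greedy (matroid) argument. Recall that a point of a polytope is a vertex exactly when it is the \emph{unique} maximizer of some linear functional; here such a functional is given by a vector $\mathbf{w}\in\bR^{n+1}$, thought of as a (real) one-parameter subgroup of $R$, and for a state $\chi^\a\in\Xi_{[I]_m}(R)$ one has $\lan\mathbf{w},\chi^\a\ran=\sum_{i=1}^{Q(m)}\lan\mathbf{w},\a(i)\ran$, the total $\mathbf{w}$-weight of the $Q(m)$ monomials indexing the nonzero Pl\"ucker coordinate. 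By the criterion recalled above, $\chi^\a$ is a state precisely when these monomials form a \emph{pivot set}, i.e.\ $I_m$ projects isomorphically onto their span, equivalently the complementary $P(m)$ monomials descend to a basis of $S_m/I_m$.

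The key step, which I expect to be the main obstacle, is to identify the $\mathbf{w}$-maximal state with an initial ideal. Since the complements of pivot sets are exactly the monomial bases of $S_m/I_m$, maximizing $\lan\mathbf{w},\,\cdot\,\ran$ over states is the same as selecting a \emph{minimum}-weight monomial basis of $S_m/I_m$, a problem solved by the greedy algorithm: one adjoins degree $m$ monomials in order of increasing $\mathbf{w}$-weight, keeping those whose image stays linearly independent modulo $I_m$. I would then argue that for a generic $\mathbf{w}$---one separating the weights of distinct degree $m$ monomials---this selects exactly the standard monomials of the associated weight order, because a monomial is rejected precisely when some $f\in I_m$ has it as its heaviest term, forcing it to be congruent modulo $I_m$ to lighter standard monomials already chosen. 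The rejected monomials are then the leading monomials, i.e.\ $(\ii_\mathbf{w}(I))_m$, and they form the unique $\mathbf{w}$-maximal pivot set; genericity supplies the uniqueness, so this state is a genuine vertex of $\cP_m(I)$.

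To finish I would assemble the bijection. Any term order $\prec$, restricted to the finitely many degree $m$ monomials, is induced by a generic integer weight $\mathbf{w}$, so the construction attaches to $\prec$ the vertex cut out by $(\ii_\prec(I))_m$, depending only on $\ii_\prec(I)$. Conversely, every vertex is the unique maximizer of some functional, which I can perturb to a generic $\mathbf{w}$ with the same maximizer and extend to a term order on $S$; hence every vertex is realized. For injectivity I would invoke the hypothesis that $m$ is at least the Gotzmann number of $P$: by Gotzmann persistence the initial ideal $\ii_\prec(I)$ is then determined by its degree $m$ piece, so distinct initial ideals give distinct leading monomial sets and hence distinct vertices, while the vertex conversely recovers the degree $m$ leading monomials and thus $\ii_\prec(I)$. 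This produces the asserted canonical bijection, the Gotzmann bound serving precisely to let the degree $m$ data faithfully encode the whole initial ideal.
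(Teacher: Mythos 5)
Your first two steps are essentially correct, and they are a legitimate route to half of the theorem: the identification of states with pivot sets, the greedy (matroid) argument showing that for a weight $\mathbf{w}$ with distinct weights on degree-$m$ monomials the unique $\mathbf{w}$-maximal state is $\sum_{x^\a \in (\ii_{\prec_{\mathbf{w}}}(I))_m} \a$, and the approximation of an arbitrary term order by such a generic weight in a single degree. This yields, for \emph{every} $m$, a canonical bijection between the vertices of $\cP_m(I)$ and the collection of degree-$m$ truncations $\{(\ii_\prec(I))_m\}$ as $\prec$ varies. Bear in mind that the paper under review does not prove this statement at all --- it is quoted from Bayer--Morrison --- so the comparison is with their proof, whose corresponding half proceeds along similar lines.

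The genuine gap is your final step, which is precisely where the content of ``sufficiently large $m$'' lives: you must show that the map $\ii_\prec(I) \mapsto (\ii_\prec(I))_m$ is \emph{injective} on initial ideals, i.e.\ that two term orders with the same degree-$m$ leading monomials have equal initial ideals in all degrees, including degrees below $m$. The appeal to Gotzmann persistence does not deliver this. Persistence-type statements control the Hilbert function of the ideal generated by a subspace $J_m \subseteq S_m$ and, in the Hilbert scheme context, let $J_m$ determine a \emph{saturated} ideal; but initial ideals of a saturated ideal are in general not saturated, so recovering $J^{\mathrm{sat}}$ --- or even $J_{\ge m}$, which already presupposes a uniform bound on generator degrees of all initial ideals, and behind it the finiteness of the set of initial ideals, neither of which you establish --- says nothing about the low-degree pieces $J_{<m}$. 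That this is a real obstruction and not a formality is shown by $I = (x^2,\, xy+y^2) \subset k[x,y]$: its two initial ideals are $(x^2, xy, y^3)$ and $(x^2, y^2)$, and these agree in every degree $\ge 3$, so for every $m \ge 3$ they determine the same vertex of $\cP_m(I)$ and the claimed bijection fails. This $I$ is not saturated (its saturation is the unit ideal), so it does not contradict the theorem as used in the paper; but your injectivity argument nowhere uses saturatedness of $I$, so it cannot be complete. Any correct proof must exploit that hypothesis and produce an actual bound --- this is exactly the part of Bayer--Morrison's argument that requires work. As it stands, your proposal proves a bijection between vertices and the truncations $(\ii_\prec(I))_m$, not between vertices and the initial ideals themselves.
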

Using the Bayer-Morrison theorem and basic properties of monomial orders and initial ideals,
decomposition formulae for initial ideals, state polytopes,  and Hilbert-Mumford indices were achieved in \cite{HyK}:

\begin{theorem}\cite{HyK}\label{T:st-decomposition} Let $X$ be a chain of projective varieties $X_1, \dots, X_\ell$ defined by a saturated homogeneous ideal $I_X = \cap_i I_{X_i}$ i.e. $X = \cup_{i=1}^\ell X_i$ and $X_i$ meets $X_j$ when and only when $|i-j|=1$. Suppose that there is a homogeneous coordinate system $x_0,\dots,x_n$ and a sequence $n_{0}=0 < n_1 < \cdots < n_\ell = n$ such that
\[
X_i \subset \{x_0 = \cdots = x_{n_{i-1}-1} = 0, x_{n_i+1} = x_{n_i+2} = \cdots = x_n = 0\}.
\]
Then the state polytope of $X$ is given by the following decomposition formula
\small\begin{equation}\label{E:main}
\cP_m(I_X) = \sum_{i=1}^\ell \cP_m( I_{X_i}\cap k[x_{n_{i-1}},\cdots,x_{n_i}]) + \sum_{i=1}^{\ell-1} \cP_m(T_i\cap k[x_{n_{i-1}}, \dots, x_n])
\end{equation} \normalsize
where  $T_i = \langle x_{n_{i-2}},\dots,x_{n_{i-1}-1}\rangle \langle x_{n_i+1},\dots,x_n \rangle$ for $2 \le i \le \ell-1$, and $T_1 = \langle x_{n_1+1},x_{n_1+2},\dots,x_n\rangle$ and $T_{\ell} = \langle x_{n_{\ell-2}},x_{n_{\ell-2}+1},\dots,x_{n_{\ell-1}-1}\rangle$.
\end{theorem}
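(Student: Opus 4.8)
The plan is to work throughout with the dual Hilbert point and to identify the state polytope with the weight polytope of the single quotient $\bar V := S_m/(I_X)_m$. Writing $\cP^*_m(I_X):=\conv\{\mathrm{wt}(B) : B \text{ a monomial basis of } \bar V\}$, where $\mathrm{wt}(B)=\sum_{x^\beta\in B}\mathrm{wt}(x^\beta)$, the description of the $x^\a$-coefficients recalled above says that the coefficient of $x^\a$ in $[I_X]_m$ is nonzero exactly when the complementary monomials form a basis of $\bar V$; hence $\cP_m(I_X)$ is the reflection $W-\cP^*_m(I_X)$ of $\cP^*_m(I_X)$ through half the total weight $W$, and every Minkowski identity for $\cP^*$ transcribes into one for $\cP$. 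All the work is thereby pushed onto the combinatorics of monomial bases of $\bar V$.

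First I would set up the character decomposition of $\bar V$. Since $X_i\subset\bP^{n_i-n_{i-1}}$ on the block of variables $x_{n_{i-1}},\dots,x_{n_i}$ and consecutive blocks overlap only in the index $n_i$, a direct check shows that a degree-$m$ monomial whose support meets two blocks lies in $(I_{X_j})_m$ for every $j$, hence in $(I_X)_m=\cap_j(I_{X_j})_m$. Thus the monomials surviving in $\bar V$ are exactly those supported in a single block $i$ and nonzero in $\bar V_i:=k[x_{n_{i-1}},\dots,x_{n_i}]_m/(J_i)_m$, where $J_i=I_{X_i}\cap k[x_{n_{i-1}},\dots,x_{n_i}]$, and the only monomials shared by two blocks are the pure powers $o_i:=x_{n_i}^m$. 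For $m$ past the Gotzmann number the Mayer--Vietoris sequence of the chain, together with $X_i\cap X_{i+1}=\{e_{n_i}\}$, identifies $\bar V$ with the iterated fibre product of the $\bar V_i$ over the one-dimensional evaluation quotients at the $\ell-1$ nodes. The combinatorial payoff, which I would isolate as a lemma, is that $B$ is a monomial basis of $\bar V$ if and only if $B=\bigcup_i B_i$ where each $B_i$ is a monomial basis of $\bar V_i$ containing the overlap monomials $o_{i-1},o_i$ available to block $i$, glued along those shared powers; consequently $\mathrm{wt}(B)=\sum_i\mathrm{wt}(B_i)-\sum_{i=1}^{\ell-1}\mathrm{wt}(o_i)$.

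With this lemma the convex geometry is immediate: the blocks $B_i$ vary independently, so the convex hull of the resulting sums is the Minkowski sum of the convex hulls, giving $\cP^*_m(I_X)=\sum_{i=1}^\ell\conv\{\mathrm{wt}(B_i): B_i\ni o_{i-1},o_i\}-\sum_{i=1}^{\ell-1}\mathrm{wt}(o_i)$, each block polytope sitting in the coordinate subspace of its own variables. The one genuinely nontrivial point is that the constraint ``$B_i$ contains the overlap powers'' does not shrink the $i$-th hull, and this is where I would invoke Theorem~\ref{T:BM}: its vertices are the degree-$m$ standard-monomial bases of the initial ideals $\ii_\prec(J_i)$, and since $e_{n_{i-1}},e_{n_i}\in X_i$ no form of $(J_i)_m$ can have $x_{n_{i-1}}^m$ or $x_{n_i}^m$ as a term (its value at the coordinate point is precisely the coefficient of that pure power), so these powers are standard for every term order. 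Hence every vertex is realized by a constrained basis and $\conv\{\mathrm{wt}(B_i): B_i\ni o_{i-1},o_i\}=\cP^*_m(J_i)$.

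Finally I would reflect back and account for the constant. Substituting $\cP^*_m(J_i)=W^{(i)}-\cP_m(J_i)$ and $\cP^*_m(I_X)=W-\cP_m(I_X)$ yields $\cP_m(I_X)=\sum_i\cP_m(J_i)+c$ with $c=W-\sum_i W^{(i)}+\sum_{i=1}^{\ell-1}\mathrm{wt}(o_i)$, a single lattice point measuring the weight of the cross-block monomials that always lie in the complementary set. The remaining task is the identification $c=\sum_i\cP_m(T_i)$: each $T_i$ is a monomial ideal cutting out a union of two coordinate linear spaces, so $S_m/(T_i)_m$ has a unique monomial basis and $\cP_m(T_i)$ is a single point, and a telescoping inclusion--exclusion over the chain matches the weights carved out by the $T_i$ with the total cross-block weight $c$. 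I expect the two load-bearing steps to be the fibre-product gluing lemma of the second paragraph and the constraint removal of the third; the reflection and the $T_i$ bookkeeping are routine once the indexing conventions of the $T_i$ and their ambient rings are pinned down.
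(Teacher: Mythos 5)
Your proposal is correct in substance, but it takes a genuinely different route from the paper's. The paper stays on the primal side: it decomposes the degree-$m$ piece of the ideal itself, $(I_X)_m = (I_X\cap W_1)\oplus(I_X\cap W_2)\oplus T_m$ with $T=\langle x_0,\dots,x_{n_1-1}\rangle\langle x_{n_1+1},\dots,x_n\rangle$ (directness coming from the fact that $x_{n_1}^m$ does not vanish at the node), and then applies Proposition~\ref{P:st-decomposition}: for a wedge power of a direct sum, the $R$-weight vectors are tensor products of weight vectors of the summands, so every state is a sum of states. No monomial bases, no term orders, no appeal to Theorem~\ref{T:BM}. You work on the dual side: states of $[I_X]_m$ are the reflection through half the total weight of the weights of monomial bases of $\bar V=S_m/(I_X)_m$, and you decompose those bases via the Mayer--Vietoris fibre product $\bar V\simeq \bar V_1\times_k\cdots\times_k\bar V_\ell$. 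Both arguments turn on the same geometric kernel --- $e_{n_i}\in X_i\cap X_{i+1}$, so every element of $(J_i)_m$ has vanishing $x_{n_i}^m$-coefficient --- but deploy it differently: the paper uses it for directness of the sum, you use it to show the pure powers are standard. The paper's route is shorter and more general (Proposition~\ref{P:st-decomposition} is pure weight theory and is reused in Section~\ref{S:HMindex}); yours keeps contact with the classical standard-monomial picture at the cost of the extra constraint-removal step, which has no counterpart in the paper.

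Three assembly points. First, the ``only if'' half of your gluing lemma does not follow from the fibre-product identification and the dimension count alone: these leave open a monomial basis $B$ of $\bar V$ with $x_{n_i}^m\notin B$, one block restricting to a full basis of $\bar V_i$ missing the pure power and the adjacent block coming up one short. Excluding this requires precisely your parenthetical evaluation observation, so that observation must be proved before the lemma, not after it. Second, once stated, that observation shows every monomial basis of $\bar V_i$ contains the overlap powers, so your constraint is vacuous and the appeal to Theorem~\ref{T:BM} can be deleted outright --- worth doing, since dispensing with Bayer--Morrison is this paper's stated purpose. Third, your hedge about the $T_i$ bookkeeping is warranted: your constant $c$ is the total weight of the genuine cross-block monomials, i.e.\ $\sum_{i=1}^{\ell-1}\mathrm{wt}\left(\left(\langle x_{n_{i-1}},\dots,x_{n_i-1}\rangle\langle x_{n_i+1},\dots,x_n\rangle\cap k[x_{n_{i-1}},\dots,x_n]\right)_m\right)$, which is exactly the point $\tau$ the paper's own proof produces along the induction. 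The $T_i$ as literally indexed in the statement give something different (for $\ell=2$ the statement's $T_1=\langle x_{n_1+1},\dots,x_n\rangle$ also counts monomials supported purely in the second block), so the reconciliation you defer is really a correction of the statement's indices; this issue affects the paper's proof in the same way it affects yours, and is no fault of your argument.
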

Here, $\st_m( I_{X_i}\cap k[x_{n_{i-1}},\cdots,x_{n_i}])$ is regarded as a convex polytope in the subspace
$
\{{\bf a} \in \bR^{n+1} \, | \, a_i = 0, \, \forall i < n_{i-1}, \forall i>n_i \}.
$
Similarly, $\st_m(T_i\cap k[x_{n_{i-1}}, \dots, x_n])$ is also regarded as a convex polytope in the relevant vector subspace. Note that $\cP_m(T_i\cap k[x_{n_{i-1}}, \dots, x_n])$ is a point since $T_i$ is a monomial ideal. We let $\tau$ denote the point $\sum_{i=1}^{\ell-1} \cP_m(T_i\cap k[x_{n_{i-1}}, \dots, x_n])$.
We shall show in Section~\ref{S:proof-main} that Theorem~\ref{T:st-decomposition} is a direct consequence of the  fact that the characters of a direct sum is the sum of the characters (Proposition~\ref{P:st-decomposition}).

It is also shown in \cite{HyK} that this decomposition is sharp: the vertices of $\cP_m(I_X)$ are precisely the sums of vertices of $\cP_m( I_{X_i}\cap k[x_{n_{i-1}},\cdots,x_{n_i}])$ and $\tau$ (Corollary~\ref{C:vertex}).
The proof in \cite{HyK} uses Theorem~\ref{T:BM} and the initial ideal decomposition formula.
   We shall show in Section~\ref{S:vertex} that the sharpness of the decomposition is in fact a consequence of a general convex geometry phenomenon.

Finally, we also reprove in Section~\ref{S:HMindex} the Hilbert-Mumford index decomposition formula below  by  using the decomposition of characters.
\begin{prop}\cite{HyK}\label{P:mu-decomposition} Let $X$ be as in Theorem~\ref{T:st-decomposition} and $\rho:\mathbb{G}_m\to \GL_{n+1}$ be a 1-parameter subgroup of $\GL_{n+1}$ diagonalized by $\{x_0,\dots,x_n\}$ with weights $(r_0,\cdots,r_n)$ and $\rho_i$ be the restriction of $\rho$ to $\GL(kx_{n_{i-1}}+\cdots+kx_{n_i})$.  Then the Hilbert-Mumford index $\mu([X]^*_m,\rho)$ of the (dual) $m$th Hilbert point of $X$ with respect to $\rho$ is given by
\small
\[
\mu([X]^*_m,\rho)=\sum_{i=1}^{\ell}\mu([X_i]^*_m,\rho_i)-\sum_{i=1}^{\ell}\bigg(\frac{mP_i(m)}{n_{i}-n_{i-1}+1}\sum_{k=n_{i-1}}^{n_i}r_k\bigg)
+ \frac{m P(m)}{n+1}\sum_{i=0}^n r_i +m\sum_{i=1}^{\ell-1}r_{n_i}
\]
\normalsize
where $P(m)$ is the Hilbert polynomial of $I_X \subset k[x_0,\dots,x_n]$ and  $P_i(m)$, the Hilbert polynomial of $I_{X_i} \cap k[x_{n_{i-1}}, \dots, x_{n_i}]$ regarded as an ideal in $k[x_{n_{i-1}},\dots,x_{n_i}]$.
\end{prop}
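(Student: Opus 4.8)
The plan is to realize $\mu([X]^*_m,\rho)$ as a support function of the dual state polytope and then to push the Minkowski decomposition of Theorem~\ref{T:st-decomposition} through the additivity of support functions. First I would set up the dictionary between the index and the state. For a monomial basis $B$ of $(S/I_X)_m$ put $e_B=\sum_{x^\gamma\in B}\gamma\in\bZ^{n+1}$ and let $\cP^*_m(I_X)=\conv\{\,e_B\,\}$ be the dual state polytope. The nonzero weight vectors of $[X]^*_m$ are indexed by such $B$ and carry the character $-e_B$, so the numerical criterion of Section~\ref{S:intro} exhibits $\mu([X]^*_m,\rho)$ as an extreme value of $\chi\mapsto\langle\rho,\chi\rangle$ on $\cP^*_m(I_X)$. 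Normalizing the index so that it is insensitive to the center of $\GL_{n+1}$ (which acts on $\bigwedge^{P(m)}S_m^*$ through the single weight $-mP(m)$) is exactly what produces the term $\tfrac{mP(m)}{n+1}\sum_{i=0}^n r_i$, the pairing of $\rho$ with the canonical diagonal point $\tfrac{mP(m)}{n+1}\mathbf 1$. Concretely, I would first record the identity
\begin{equation}\label{E:mu-support}
\mu([X]^*_m,\rho)=\frac{mP(m)}{n+1}\sum_{i=0}^n r_i-\max_{\chi\in\cP^*_m(I_X)}\langle\rho,\chi\rangle,
\end{equation}
together with the same identity for each $X_i$ inside its own coordinate subspace $k[x_{n_{i-1}},\dots,x_{n_i}]$, with $P(m),n,\rho$ replaced by $P_i(m),\,n_i-n_{i-1},\,\rho_i$.

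The second step is combinatorial. The character-decomposition engine behind Theorem~\ref{T:st-decomposition} (Proposition~\ref{P:st-decomposition}) applies verbatim to the dual Hilbert point, so $\cP^*_m(I_X)$ carries a Minkowski decomposition of the same shape as \eqref{E:main}: it is the Minkowski sum of the dual state polytopes $\cP^*_m(I_{X_i})$ of the pieces (each supported on the coordinates $n_{i-1},\dots,n_i$) translated by a single lattice point $\tau^*$ determined by the monomial ideals $T_i$. Since the support function of a Minkowski sum is the sum of support functions, and since on the $i$th subspace $\rho$ restricts to $\rho_i$,
\[
\max_{\chi\in\cP^*_m(I_X)}\langle\rho,\chi\rangle=\sum_{i=1}^\ell\max_{\chi\in\cP^*_m(I_{X_i})}\langle\rho_i,\chi\rangle+\langle\rho,\tau^*\rangle.
\]
Applying the per-piece form of \eqref{E:mu-support} rewrites each summand as $\tfrac{mP_i(m)}{n_i-n_{i-1}+1}\sum_{k=n_{i-1}}^{n_i}r_k-\mu([X_i]^*_m,\rho_i)$, and substituting back into \eqref{E:mu-support} already yields the sum $\sum_i\mu([X_i]^*_m,\rho_i)$, the normalization sum $-\sum_i\tfrac{mP_i(m)}{n_i-n_{i-1}+1}\sum_k r_k$, and the term $\tfrac{mP(m)}{n+1}\sum_i r_i$ of the Proposition.

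What remains --- and what I expect to be the crux --- is the explicit evaluation of the leftover translation term $-\langle\rho,\tau^*\rangle$, which must equal the node contribution $m\sum_{i=1}^{\ell-1}r_{n_i}$. Geometrically $\tau^*$ records both the failure of the dual states of the pieces to fill out the dual state of $X$ and the mismatch between the central normalization $\tfrac{mP(m)}{n+1}\mathbf 1$ and its per-piece counterparts $\tfrac{mP_i(m)}{n_i-n_{i-1}+1}\mathbf 1$ living in different ambient dimensions. The hard part is thus a finite bookkeeping computation: read off the (single) state point of each monomial ideal $T_i=\langle x_{n_{i-2}},\dots,x_{n_{i-1}-1}\rangle\langle x_{n_i+1},\dots,x_n\rangle$, pair it with $\rho$, and check that after the dimension constants are reconciled the only surviving contributions are the weights $r_{n_i}$ at the $\ell-1$ nodes, each with multiplicity $m$. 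The scalar direction $\rho=(1,\dots,1)$ is a good sanity check that fixes all signs and constants: there $\mu([X]^*_m,\rho)=0$ and each $\mu([X_i]^*_m,\rho_i)=0$ by projective invariance, the two normalization sums reduce to $-m(\ell-1)$ via $P(m)=\sum_i P_i(m)-(\ell-1)$, and this is restored to $0$ by the node term, consistently with $\langle\rho,\tau^*\rangle=-m(\ell-1)$.
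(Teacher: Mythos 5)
You have the right skeleton, and it is essentially the paper's own proof in support-function clothing: the paper, too, decomposes the pairing $\langle\chi,\rho\rangle$ along the state decomposition \eqref{E:states-hilbert}, absorbs the $\GL$-versus-$\SL$ normalization into the average weights $w$ and $w_i$ (these are exactly your two normalization sums), finishes with the monomial count $S^mV=S^mV_1\sqcup(S^mV_2\setminus\{x_{n_1}^m\})\sqcup T_m$, and passes to the dual Hilbert point only at the last step. But two of your steps have genuine problems. The pivotal one is the claim that Proposition~\ref{P:st-decomposition} ``applies verbatim to the dual Hilbert point.'' It does not: that proposition requires the subspace in question to be the \emph{direct} sum of its intersections with the pieces, and the subspace relevant to $[X]^*_m$ is the annihilator $(I_X)_m^\perp\subset S_m^*$, whose top exterior power gives the dual Hilbert point. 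Already for $\ell=2$ directness fails: writing $W_i^\vee$ for the span of the functionals $f^\alpha$ dual to the monomials of $W_i=S^mV_i$, the functional dual to $x_{n_1}^m$ lies in \emph{both} $(I_X)_m^\perp\cap W_1^\vee$ and $(I_X)_m^\perp\cap W_2^\vee$, because every element of $I_{X_i}\cap W_i$ has vanishing $x_{n_1}^m$-coefficient ($x_{n_1}^m$ does not vanish at the node). So the two pieces share a line --- the same phenomenon as $P(m)=\sum_i P_i(m)-(\ell-1)$ --- and Proposition~\ref{P:st-decomposition} cannot be invoked. The Minkowski decomposition of $\cP^*_m(I_X)$ you want is nevertheless true, but it must be \emph{deduced} from Theorem~\ref{T:st-decomposition} by complementation: a monomial basis $B$ of $(S/I_X)_m$ and its complementary monomial set $A$ satisfy $e_A+e_B=\sigma$, where $\sigma=\sum_{x^\alpha\in S_m}\alpha$, so that $\cP^*_m(I_X)=\sigma-\cP_m(I_X)$ and similarly for the pieces; alternatively, argue as the paper does, proving the formula for $[I_X]_m$ and dualizing at the very end.

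Second, you defer exactly the computation that carries the content of the formula, namely $-\langle\rho,\tau^*\rangle=m\sum_{i=1}^{\ell-1}r_{n_i}$. With the complementation set-up in place this is immediate: for $\ell=2$ one gets $\tau^*=\sigma-\sigma_1-\sigma_2-\tau$ with $\sigma_i=\sum_{x^\alpha\in S^mV_i}\alpha$, and the disjoint-union identity above says precisely $\sigma=\sigma_1+\sigma_2+\tau-\mathbf{e}$, where $\mathbf{e}$ is the exponent vector of $x_{n_1}^m$; hence $\tau^*=-\mathbf{e}$ and $-\langle\rho,\tau^*\rangle=m r_{n_1}$, as required. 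But without that set-up your $\tau^*$ is not given by any formula one could compute with, and ``check that the only surviving contributions are the $r_{n_i}$'' is an assertion, not a proof, at the very point where the node term and the reconciliation of all the constants are created. A final caution: your support-function identity for $\mu([X]^*_m,\rho)$ silently fixes a sign convention --- as written it says that $\mu$ is the $\rho'$-weight of the limit point, i.e.\ $\mu=\min\langle\chi,\rho'\rangle$ over states, which is the \emph{opposite} of the convention $\mu=\max\{-\langle\chi,\rho\rangle\}$ displayed at the opening of Section~\ref{S:HMindex} (it is, in fact, the choice under which the Proposition's formula comes out exactly as stated). Your sanity check at $\rho=(1,\dots,1)$ cannot detect this sign, because there $\mu([X]^*_m,\rho)$ and every $\mu([X_i]^*_m,\rho_i)$ vanish, so the check constrains only the constant terms, which are insensitive to flipping the identity.
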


We close this section with an observation that will be used in Section~\ref{S:HMindex}: We shall prove that the Hilbert-Mumford index of $[I_X]_m$ and that of the dual Hilbert point $[I_X]_m^*$ are  the same. Let $\rho$ be a one-parameter subgroup of $SL(S_1)$. We recall the fact that $\lim_{t\to 0}\rho(t).[I_X]_m = [\ii_{\prec_\rho}I_X]_m$ where $\prec_\rho$ is the $\rho$-weight order with the reverse lexicographic tie-breaking \cite{BM}. Then the Hilbert-Mumford index is
\[
\mu([I_X]_m, \rho) =  \mu([\ii_{\prec_\rho}I_X]_m, \rho) = - \sum_{x^\a \in (\ii_{\prec_\rho}I_X)_m} \textrm{wt}_\rho(x^\alpha).
\]
Let $\{f_0, \dots, f_n\} \subset S_1^*$ be the dual basis of $\{x_0, \dots, x_n\} \subset S_1$. Use the multi-vector notation $f^\a = \prod_{i=0}^nf_i^{\a_i}$. Then $f^{\a(1)}\wedge \cdots f^{\a(P(m))}$ appears in $\bigwedge^{P(m)} (S/I_X)_m^*$ with a nonzero Pl\"ucker co-ordinate if and only if $x^{\a(1)}, \dots, x^{\a(P(m))}$ form a basis of $(S/I_X)_m$. Since $\rho$ is a co-character of the special linear group, the weights of all monomials of $S_m$ sum up to zero. Also, $\rho$ acts on $\bP(\bigwedge S_m)$ and $\bP(\bigwedge S_m^*)$ with opposite weights. Hence we have
\[
\begin{array}{ccl}
\mu([I_X]_m^*, \rho) & = & \max\{- \sum_{x^\a \in \mathcal B}\textrm{wt}_\rho(f^\a) \, | \, \mbox{$\mathcal B$ a monomial basis of $(S/I_X)_m$}\}\\
& = & - \sum_{x^\a \in S_m\setminus \ii_{\prec_\rho}(I_X)} \left( - \mathrm{wt}_\rho(x^\a)\right) = -  \sum_{x^\a \in (\ii_{\prec_\rho}(I_X))_m} \mathrm{wt}_\rho(x^\a).
\end{array}
\]

\section{Decomposition of states}
Let $G = \GL(V)$ and let $V_i$, $i = 1, \dots, \nu$, be vector subspaces of $V$ that span $V$. Note that $V = \sum_{i=1}^\nu$ is {\it not necessarily a direct sum}. Then
\[
S^mV = \sum_{i=1}^\nu S^m V_i + \sum_{\substack{ \sum_{j}m_{j}=m\\ 0 < m_j < m}} \bigotimes_{j=1}^\nu S^{m_j} V_j.
\]
For a notational convenience, we let $W$ denote $S^mV$, $W_j = S^mV_j$ for $1 \le j \le \nu$ and
\[
W_{\nu+1} =  \sum_{\substack{ \sum_{j}m_{j}=m\\ 0 < m_j < m}} \bigotimes_{j=1}^\nu S^{m_j} V_j.
\]

Let $R$ be a maximal torus of $\GL(V)$ which preserves the subspaces $V_i$. Then one can choose a basis of $\mathcal B = \{v_1, \dots, v_M\}$ of $V$ diagonalizing the $R$-action such that $V_j$ is the linear subspace spanned by $\{v_{M_j}, v_{M_j+1}, \dots, v_{M_j'}\}$.  We identify $GL(V_j)$ with the subgroup of $GL(V)$ which preserves $V_j$ and acts trivially on $\mathrm{Span}\{v_i \, |  i < M_j, i > M_j'\}$.

Let $\chi_i$ be the character of $R$ determined by $t.v_i = \chi_i(t)v_i$.  Set $R_j = R \cap \GL(V_j)$. Then there is a  natural
projection $\pi_j : R \to R_j$ defined by
\[
\chi_s(\pi_j(t)) = \begin{cases} \chi_s(t), & M_j \le s \le M'_j \\ 1, & \mbox{else}\end{cases}.
\]
Then $\pi_j$'s induce injective group homomorphisms $\pi_j^*: X(R_j) \inj X(R)$. We shall identify $X(R_j)$ with its image in $X(R)$ under $\pi_j^*$.

\begin{prop}\label{P:st-decomposition} Let $I$ be a subspace of $W$, $I_j = I\cap W_j$, and suppose that the sum
$
I = \sum_{j=1}^{\nu+1} I_j
$ is direct. Let $\dim I = N$ and $\dim I_j = N_j$, $1 \le j \le \nu+1$. We have the decomposition of states
\[
\Xi_{\left[\bigwedge^N I\right]}(R) = \sum_{j=1}^\nu \Xi_{\left[\bigwedge^{N_j}I_j\right]}(R_j) + \Xi_{\left[\bigwedge^{N_{\nu+1}}I_{\nu+1}\right]}(R).
\]
\end{prop}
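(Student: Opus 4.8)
The plan is to use the directness of $I = \bigoplus_{j=1}^{\nu+1} I_j$ to realize $[\wedge^N I]$ as the image of a tensor product of the smaller Plücker points, and then to read off its state from the elementary fact that the character of a tensor product is the product of characters. Concretely, since the sum is direct, concatenating bases of the $I_j$ produces a basis of $I$, so that $\wedge^N I = \pm\, w_1 \wedge \cdots \wedge w_{\nu+1}$, where $w_j := \wedge^{N_j} I_j \in \wedge^{N_j} W_j$. Equivalently, $[\wedge^N I]$ is the image of $[w_1] \otimes \cdots \otimes [w_{\nu+1}]$ under the natural $R$-equivariant wedge map $\Phi \colon \bigotimes_{j} \wedge^{N_j} W_j \to \wedge^N W$.

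First I would compute the state of the tensor $w_1 \otimes \cdots \otimes w_{\nu+1}$. Writing the $R$-weight decomposition $w_j = \sum_{\mu} (w_j)_\mu$, the tensor product splits over the multi-weights $\vec{\mu} = (\mu_1, \dots, \mu_{\nu+1})$, and the summands $\bigotimes_j (w_j)_{\mu_j}$ lie in \emph{distinct} direct summands of $\bigotimes_j \wedge^{N_j} W_j$. Since a tensor of nonzero vectors is nonzero, there is no cancellation, and the state of the tensor product is exactly the Minkowski sum $\sum_{j} \Xi_{w_j}(R)$. This is the precise content of ``the characters of a direct sum add up'' in the present setting.

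Next, for $j \le \nu$ I would identify $\Xi_{w_j}(R)$ with $\Xi_{[\wedge^{N_j} I_j]}(R_j)$. Every monomial of $W_j = S^m V_j$ is supported on $v_{M_j}, \dots, v_{M_j'}$, so its $R$-character factors through $\pi_j$; hence every weight of $\wedge^{N_j} W_j$ lies in $\pi_j^* X(R_j)$, and under the identification $X(R_j) \hookrightarrow X(R)$ the $R$-state and the $R_j$-state of $w_j$ coincide. The last factor $j = \nu+1$ is simply recorded as $\Xi_{[\wedge^{N_{\nu+1}} I_{\nu+1}]}(R)$. Combined with the previous paragraph, this yields the asserted formula, \emph{provided} that passing from the tensor product through $\Phi$ does not alter the state.

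That last proviso is where I expect the real work to lie. Because $\Phi$ is $R$-equivariant, one has $(\wedge^N I)_\chi = \Phi\big(\sum_{\sum_j \mu_j = \chi}\bigotimes_j (w_j)_{\mu_j}\big)$, so the inclusion $\Xi_{[\wedge^N I]}(R) \subseteq \sum_j \Xi_{w_j}(R)$ is immediate. The reverse inclusion demands that the wedge terms $\bigwedge_j (w_j)_{\mu_j}$ survive and remain linearly independent across distinct $\vec{\mu}$, i.e.\ that $\Phi$ does not collapse the multi-weight pieces it receives. This is delicate precisely because the blocks $W_1, \dots, W_{\nu+1}$ need \emph{not} be in direct sum inside $W$ (the $V_i$ overlap): a monomial lying in an overlap $W_i \cap W_j$ could in principle appear in two different factors and annihilate a wedge term, so that a weight of the Minkowski sum might fail to appear on the left. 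Ruling this out — showing that on the support of $I$ the blocks meet cleanly enough that $\Phi$ induces a bijection of the relevant weight supports — is the main obstacle, and it is exactly here that the hypotheses $I_j = I \cap W_j$ and the directness of $I = \bigoplus_j I_j$ (which force $I \cap W_i \cap W_j = 0$) must be leveraged, allowing one to choose a weight-compatible basis that realizes the pertinent part of $\Phi$ as the injective exterior multiplication of a genuine direct sum. In the chain geometry of Theorem~\ref{T:st-decomposition}, where consecutive pieces share only a single boundary coordinate, this clean meeting is what ultimately makes the sharp decomposition hold.
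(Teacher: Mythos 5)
Your route is the same as the paper's: realize $\bigwedge^N I$ as the image of $\bigotimes_{j}\bigwedge^{N_j}I_j$ under the wedge map, use that characters add under tensor products, and identify $R$-states with $R_j$-states for $j\le \nu$. In fact the paper's proof consists of exactly the steps you complete, followed by a bare assertion of the step you leave open: it declares that a summand $\bigotimes_j(\xi_j)_{\chi^{(j)}}$ is a state of $\xi$ if and only if each $\chi^{(j)}$ is a state of $\xi_j$, without addressing whether the wedge map can annihilate such a summand or whether distinct multi-weights can cancel in $\bigwedge^N W$. So you have correctly isolated the one genuinely nontrivial point of the argument; but since your proposal does not prove it, it has a gap precisely there, and it is a real gap rather than a routine verification.

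Moreover, that gap cannot be closed along the lines you indicate. You propose to use only $I_j=I\cap W_j$ and directness (which does force $I\cap W_i\cap W_j=0$); but the conclusion can fail under exactly those hypotheses. Take $V=\langle v_1,v_2,v_3\rangle$, $V_1=\langle v_1,v_2\rangle$, $V_2=\langle v_2,v_3\rangle$, $m=2$, and
\[
I=k(v_1^2+v_2^2)\oplus k(v_2^2+v_3^2)\oplus k\,v_1v_3\subset S^2V.
\]
One checks $I_1=I\cap W_1=k(v_1^2+v_2^2)$, $I_2=I\cap W_2=k(v_2^2+v_3^2)$, $I_3=I\cap W_3=k\,v_1v_3$, the sum is direct, and $I\cap W_1\cap W_2=0$. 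The right-hand side of the proposition contains $2\chi_2+2\chi_2+(\chi_1+\chi_3)=\chi_1+4\chi_2+\chi_3$, but expanding
\[
\xi=(v_1^2+v_2^2)\wedge(v_2^2+v_3^2)\wedge v_1v_3
\]
yields only the weights $3\chi_1+2\chi_2+\chi_3$, $3\chi_1+3\chi_3$, $\chi_1+2\chi_2+3\chi_3$: the summand that should carry $\chi_1+4\chi_2+\chi_3$ is $v_2^2\wedge v_2^2\wedge v_1v_3=0$. So directness is not the operative hypothesis. What actually makes the geometric application (Theorem~\ref{T:st-decomposition}) work is disjointness of monomial supports: the point $p=X_1\cap X_2$ is the $x_\ell$-coordinate point and lies on both $X_1$ and $X_2$, so every element of $I_1$ and of $I_2$ has vanishing $x_\ell^m$-coefficient, and hence the monomials occurring in $I_1$, $I_2$, $T_m$ are pairwise disjoint. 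Granting that, let $U_j$ be the span of the monomials occurring in $I_j$; the $U_j$ are $R$-submodules in direct sum, so the multiplication map $\bigotimes_j\bigwedge^{N_j}U_j\to\bigwedge^NW$ is injective and every summand $(\xi_1)_{\chi^{(1)}}\wedge\cdots\wedge(\xi_{\nu+1})_{\chi^{(\nu+1)}}$ is nonzero; furthermore a monomial-wedge basis vector indexed by $A_1\cup\cdots\cup A_{\nu+1}$ with $A_j\subset\mathrm{supp}(I_j)$ determines each $A_j$, hence each $\chi^{(j)}$, so summands with distinct multi-weights cannot cancel. Supplying this disjoint-support hypothesis and argument is what your proof needs to be complete — and, strictly speaking, it is what the paper's own proof needs as well.
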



\begin{proof}

Let $\xi$ be an affine point over $\left[\bigwedge^NI\right] \in \bP(\bigwedge^NS^mV)$, and let $\xi_j$ be an affine point over $\left[\bigwedge^{N_j}I_j\right]$ in $\bigwedge^{N_j}W_j$.
The affine point $\xi_j$ generates the one-dimensional subspace $\bigwedge^{N_j} I_j$. Let $j \le \nu$. Consider the $R$-weight decomposition of $\xi_j$.  Since $I_j$ is contained in the $R$-module $W_j = S^mV_j$, the $R$-weight decomposition of $\xi_j$ is precisely the $R_j$-weight decomposition i.e.
 \[
 \xi_j = \sum_{\chi \in X(R)} (\xi_j)_{\chi} = \sum_{\chi \in X(R_j)} (\xi_j)_{\chi}.
 \]
Since the sum $I = \sum_{j=1}^{\nu+1} I_j$ is direct, we have
\[
\bigwedge^N I \simeq \bigotimes_{j=1}^{\nu+1}\left(\bigwedge^{N_j}I_j \right)
\]
and hence the $R$-weight decomposition of $\xi$ is given as
\begin{equation}\label{E:states}
\xi = \sum_{\substack{\chi^{(j)} \in X(R_j) \\ \chi^{\nu+1}\in X(R)}}  \bigotimes_{j=1}^{\nu+1}(\xi_j)_{\chi^{(j)}}
\end{equation}
A summand $\bigotimes_{j=1}^{\nu+1}(\xi_j)_{\chi^{(j)}}$ has a weight $\sum_{j=1}^{\nu+1}\chi^{(j)} \in X(R)$ and it is a state of $\xi$ if and only if the weight vector $\sum_{j=1}^{\nu+1} (\xi_j)_{\chi^{(j)}}$ is nonzero if and only if $\chi^{(j)}$ is a state of $\xi_j$, $\forall j$. It follows that every state of $\xi$ is a sum of states of $\xi_i$'s and vice versa.
\end{proof}

\subsection{Proof of Theorem~\ref{T:st-decomposition}}\label{S:proof-main}
We shall now deduce Theorem~\ref{T:st-decomposition} from Proposition~\ref{P:st-decomposition}. Let $X = X_1 \cup X_2 \subset \bP(V^*)$ be a chain of subvarieties $X_i$ and suppose that there exists a homogeneous coordinate system $x_0,\dots, x_\ell, \dots, x_n \in V$ such that
\[
\begin{array}{l}
	X_1 \subset \{x_{\ell+1} = \cdots = x_n = 0\} \\
	X_2 \subset \{x_0 = \cdots = x_{\ell-1} = 0\}.
\end{array}\tag{$\dagger$}
\]
We also assume that $X_1\cap X_2 \ne \emptyset$, and that $X, X_1, X_2$ are cut out by saturated homogeneous ideals $I_{X_i}$ and $I_X = I_{X_1}\cap I_{X_2}$.

Let $V_1$ (resp. $V_2$) be the subspace of $V$  spanned by $\{x_0, \dots, x_\ell\}$ (resp. $\{x_\ell, \dots, x_n\}$). Let $W = S^mV$, $W_i = S^mV_i$ for $i = 1,2$, and $W_3 = \sum_{i+j=m, ij\ne 0}S^iV'_1 \otimes S^jV'_2$ where $V'_1 = \sum_{i=0}^{\ell-1} k x_i$ and $V'_2 = \sum_{i=\ell+1}^n k x_i$.
Evidently we have $W = \sum_{i=1}^3 W_i$. Let $R$ be the maximal torus of $\GL(V)$ diagonalized by $x_0,\dots,x_n$ and $R_i = R \cap \GL(V_i)$ for each $i$, where $\GL(V_i)$ is identified with a suitable subgroup of $\GL(V)$ as in the discussion preceding Proposition~\ref{P:st-decomposition}. Of course, $R_1$ (resp. $R_2$) is identified with the maximal torus of $\GL(V_1)$ (resp. $\GL(V_2)$) diagonalized by $x_0,\dots, x_\ell$ (resp. $x_\ell, \dots, x_n$).

For each $m\ge 2$, we have
\begin{equation}\label{E:decomp-IX}
(I_X)_m = (I_X\cap W_1) + (I_X\cap W_2) + (I_X \cap W_3).
\end{equation}
We claim that the property of coordinates ($\dagger$) implies that this is a direct sum decomposition.
Indeed, since $W_1\cap W_2$ is the $1$-dimensional space $k.x_\ell^m$ and $x_\ell^m$ does not vanish at $\{p\} = X_1\cap X_2$, $I_X\cap W_1 \cap W_2 = 0$. Since $W_1$ and $W_2$ meet $W_3$ trivially, the claim follows and we may apply Proposition~\ref{P:st-decomposition}.

Note that the three terms on the right hand side of (\ref{E:decomp-IX}) are $I_{X_1}\cap k[x_0,\dots, x_\ell]_m$, $I_{X_2} \cap k[x_\ell,\dots, x_n]_m$ and $T_m$ respectively, where $T = \langle x_0,\dots, x_{\ell-1}\rangle \langle x_{\ell+1},\dots,x_n\rangle$. Then by Proposition~\ref{P:st-decomposition}, we have
\begin{equation}\label{E:states-hilbert}
\Xi_{\bigwedge^N (I_X)_m}(R) = \Xi_{\left[\bigwedge^{N_1}(I_X\cap W_1)\right]}(R_1) + \Xi_{\left[\bigwedge^{N_2}(I_{X}\cap W_2)\right]}(R_2) + \Xi_{\left[\bigwedge^{N_3}T_m\right]}(R) \tag{$\dagger\dagger$}
\end{equation}
where $N$ and $N_i$ denote the appropriate dimensions. As observed in the introduction, we may naturally identify the characters $\prod_{i=0}^n \chi_i^{a_i}$ with the Laurent monomials $\prod_{i=0}^n x_i^{a_i} \in k(x_0,\dots,x_n)$, where $\chi_i$ is the projection $t=(t_0, \dots, t_N) \mapsto t_i$. Identifying the characters with monomials and taking the convex hull of both sides, we obtain Theorem~\ref{T:st-decomposition} for the case $\ell = 2$ from which, as observed in \cite{HyK}, the general case follows by a simple induction.

\begin{remark} Note that since $T$ is a monomial ideal, $\Xi_{\left[\bigwedge^{N_3}T_m\right]}(R)$ consists of one point $\chi^\tau$ where $\tau = \sum_{\substack{x^\a \in T_m}} \a$.
	\end{remark}

\subsection{Decomposition of vertices}\label{S:vertex}
Let $P_1, \dots, P_r$ be polytopes in $\bR^n$. In general, every {\it face} $F$ of the Minkowski sum $\sum_{i=1}^r P_i$ has a unique decomposition $F = \sum_{i=1}^r F_i$ into a sum of faces $F_i$ of $P_i$. The converse is easily seen to be false: If the origin ${\bf 0}$ is a vertex of a polytope $P$, then ${\bf 0} + v = v$ is not a vertex of $2P$ for any nonzero vertex $v$ of $P$. The following lemma guarantees that vertices always sum up to be a vertex provided that the polytopes are positioned well enough.

\begin{lemma}\label{L:vertex} Let $P_1, P_2$ be polytopes in $\bR^n$. Suppose that $P_1$ and $P_2$ are contained in affine hyperplanes $H_1$ and $H_2$ respectively such that $H_1\cap H_2$ is of dimension one. Then the vertices of the Minkowski sum $P_1 + P_2$ are precisely the sums of vertices of $P_1$ and $P_2$.
\end{lemma}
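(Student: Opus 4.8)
The plan is to detect vertices by strictly supporting linear functionals and to exploit the compatibility of face selection with Minkowski sums. For a linear functional $\ell$ and a polytope $P$ write $\mathrm{face}_\ell(P)$ for the set of points of $P$ on which $\langle \ell,\cdot\rangle$ attains its maximum; a point $w$ is a vertex of $P$ precisely when $\mathrm{face}_\ell(P)=\{w\}$ for some $\ell$. The basic fact to invoke is that $\mathrm{face}_\ell(P_1+P_2)=\mathrm{face}_\ell(P_1)+\mathrm{face}_\ell(P_2)$ for every $\ell$. The forward inclusion is then immediate from the general face-decomposition recalled above: if $w$ is a vertex of $P_1+P_2$, choose $\ell$ with $\mathrm{face}_\ell(P_1+P_2)=\{w\}$; writing $\{w\}=\mathrm{face}_\ell(P_1)+\mathrm{face}_\ell(P_2)$ and comparing dimensions forces each $\mathrm{face}_\ell(P_i)$ to be a single point, so $w=v_1+v_2$ with $v_i$ a vertex of $P_i$.

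The substance of the lemma is the converse, and this is where I would spend the effort. Given vertices $v_1$ of $P_1$ and $v_2$ of $P_2$, I want a single functional $\ell$ with $\mathrm{face}_\ell(P_1)=\{v_1\}$ and $\mathrm{face}_\ell(P_2)=\{v_2\}$ simultaneously; by the displayed identity this is exactly what makes $v_1+v_2$ a vertex of $P_1+P_2$. The first observation is that, since $P_i\subset H_i$, the restriction of $\langle\ell,\cdot\rangle$ to $P_i$ changes only by a global constant when $\ell$ is altered by a functional vanishing on the direction space $\widehat H_i$ of $H_i$; hence the condition $\mathrm{face}_\ell(P_i)=\{v_i\}$ depends only on $\ell|_{\widehat H_i}$ and cuts out a nonempty open convex cone $C_i$ of functionals, invariant under adding the annihilator $\widehat H_i^{\,\perp}$. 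The problem is thus reduced to showing $C_1\cap C_2\neq\emptyset$.

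The main obstacle is precisely this nonemptiness, and it is here that the hypothesis on $H_1$ and $H_2$ does the work. Because $H_1\cap H_2$ is one-dimensional, the direction spaces $\widehat H_1$ and $\widehat H_2$ overlap only in the single line $\widehat L$ spanned by that intersection, so the constraints defining $C_1$ and $C_2$ involve essentially independent families of directions, the sole coupling occurring along $\widehat L$. I would first pick $\ell_i\in C_i$ and then correct by elements of $\widehat H_1^{\,\perp}$ and $\widehat H_2^{\,\perp}$ to reconcile the two restrictions; the openness of the cones $C_i$ provides the room to carry this out along the one common direction $\widehat L$, producing a functional lying in both cones. The geometric hypothesis is exactly what excludes the degenerate alignment exhibited before the lemma --- where $P_1$ and $P_2$ share too large a span and the two maximization conditions become contradictory --- so that the reconciliation along $\widehat L$ can always be achieved. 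Once $\ell\in C_1\cap C_2$ is produced, $\mathrm{face}_\ell(P_1+P_2)=\{v_1\}+\{v_2\}=\{v_1+v_2\}$, so $v_1+v_2$ is a vertex, completing the converse and hence the proof.
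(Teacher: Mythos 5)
Your forward direction is fine, and your strategy for the converse --- produce a single functional strictly supporting both summands at the chosen vertices, via $\mathrm{face}_\ell(P_1+P_2)=\mathrm{face}_\ell(P_1)+\mathrm{face}_\ell(P_2)$ --- is a sound one. The gap is that the one step carrying all the content, the nonemptiness $C_1\cap C_2\neq\emptyset$, is never actually proved: you assert that openness of the cones lets you ``reconcile'' the two restrictions along the common line $\widehat L$, but openness only buys small perturbations, and the coupling along $\widehat L$ can be outright contradictory. Indeed, under your reading of the hypothesis (the direction spaces $\widehat H_1$ and $\widehat H_2$ share the line $\widehat L$), the lemma itself is false, so no argument could close this gap. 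Take $n=3$, $H_1=\{z=0\}$, $H_2=\{y=0\}$, so $H_1\cap H_2$ is the $x$-axis; let $P_1=P_2$ be the segment from ${\bf 0}$ to $e_1$ (or thicken them to unit squares in their respective planes). Then $e_1$ is a vertex of $P_1$ and ${\bf 0}$ is a vertex of $P_2$, but $e_1+{\bf 0}=e_1$ is the midpoint of an edge of $P_1+P_2$, not a vertex; correspondingly $C_1=\{\ell \mid \ell(e_1)>0\}$ and $C_2=\{\ell \mid \ell(e_1)<0\}$ restrict to opposite open half-lines on $\widehat L$, and no wiggling reconciles them. Note that this is exactly the degenerate alignment ${\bf 0}+v\in P+P$ mentioned just before the lemma, so your construal of the hypothesis does not exclude it, contrary to what you claim.

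What the paper actually uses --- visible from the coordinates chosen at the start of its proof and from Corollary~\ref{C:vertex}, where the lemma is applied --- is that each $H_i$ is a hyperplane of a coordinate subspace cut out by an \emph{inhomogeneous} equation: $H_1=\{\sum_{i\le n_1}x_i=N_1,\ x_i=0\ \forall i>n_1\}$ and $H_2=\{\sum_{i\ge n_1}x_i=N_2,\ x_i=0\ \forall i<n_1\}$. There it is the \emph{linear spans} of $H_1$ and $H_2$ that meet in a line (the $x_{n_1}$-axis), while the direction spaces satisfy $\widehat H_1\cap\widehat H_2=\{{\bf 0}\}$ precisely because of the inhomogeneous constraints; those constraints are what the paper's own proof invokes in its last display, when it recovers the $n_1$-th coordinate from $\sum_k v_{ik}=N_1$. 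Under the hypothesis $\widehat H_1\cap\widehat H_2=\{{\bf 0}\}$ your approach works and in fact becomes trivial: choose linear functionals $\phi_i$ on $\widehat H_i$ strictly supporting $P_i$ at $v_i$, extend the pair to $\widehat H_1\oplus\widehat H_2$ and then arbitrarily to $\bR^n$ to get $\ell$; then for $p_i\in P_i$ one has $\ell\bigl((p_1+p_2)-(v_1+v_2)\bigr)=\phi_1(p_1-v_1)+\phi_2(p_2-v_2)<0$ unless $p_1=v_1$ and $p_2=v_2$, so $v_1+v_2$ is a vertex --- no openness argument is needed. (The paper instead argues by contradiction, writing the candidate point as a convex combination of other vertex sums and using the two affine equations to exhibit $v_1$ as a convex combination of the remaining vertices of $P_1$.) So your plan is repairable, but as written the decisive step is unproved, and under the hypothesis as you interpret it, unprovable.
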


\begin{proof}It is evident that a vertex of $P_1+P_2$ is a sum of vertices of $P_1$ and $P_2$ since for any subsets $S_1$ and $S_2$ of $\bR^n$, the sum of their convex hulls is the convex hull of their sum $S_1 + S_2$. To prove the converse, we start by choosing affine coordinates $x_1, \dots, x_n$ judiciously so that
\[
H_1 = \left\{ {\bf x}\in \bR^n \, \left| \, \sum_{i=1}^{n_1} x_i = N_1, x_i = 0 \, \forall i > n_1\right\} \right.
\]
and
\[
H_2 = \left\{ {\bf x} \in \bR^n \, \left| \, \sum_{i=n_1}^n x_i = N_2, x_i = 0 \, \forall i < n_1\right\}.\right.
\]

Let $\{v_1, \dots, v_r\}$ and $\{w_1,\dots, w_s\}$ be the sets of vertices of $P_1$ and $P_2$, respectively. We aim to show that $v_i + w_j$ is a vertex of $P_1 + P_2$, for any $i, j$. Suppose it is not the case - suppose without losing generality $v_1 + w_1$ is not a vertex. Then there
 exist $\la_{ij}$ for $i=1,\dots,r, j=1,\dots,s$ such that $\sum_{i,j}\la_{ij}=1$, $0\leq\la_{ij}\leq 1$, $\la_{11}=0$ and
\[
v_1+w_1=\sum_{i,j}\la_{ij}(v_i+w_j).
\]
By rearranging the terms, we have
\[
\left(1-\sum_{j}\la_{1j}\right)v_1-\sum_{i\neq 1}\la_{ij}v_i=\sum_{j\neq 1}\la_{ij}w_j+\left(\sum_{i}\la_{i1}-1\right)w_1 \in H_1 \cap H_2
\]
which implies that $x_{n_1}$ is the only nonzero coordinate of each side. Moreover, $1-\sum_{j}\la_{1j}\neq 0$ or $\sum_{i}\la_{i1}-1\neq 0$ since $\sum_{i,j}\la_{ij}=1$. Suppose the $1-\sum_{j}\la_{1j}\neq 0$ (the other case is proved similarly) and let $v_i = (v_{i0},v_{i1},\dots,v_{in_1})$. Then we have
\[
v_{1k}=\sum_{\substack{i\neq 1\\1\le j \le s}}\m_{ij}v_{ik} \quad k\ne n_1
\]
where $\m_{ij}=\la_{ij}/(1-\sum_{j'=1}^s\la_{1j'})$ and $\sum_{\substack{i\neq 1\\1\le j \le s}}\m_{ij}=1$.

The $n_1$th coordinate also satisfies the above condition because
\begin{align*}
v_{1n_1}&=N_1-\sum_{k=0}^{n_1-1}v_{1k}= N_1-\sum_{k=0}^{n_1-1}\sum_{i\neq 1}\m_{ij}v_{ik}
=\sum_{i\neq 1}\m_{ij}N_1-\sum_{i\neq 1}\sum_{k=0}^{n_1-1}\m_{ij}v_{ik}\\
&=\sum_{i\neq 1}\m_{ij}\left(N_1-\sum_{k=0}^{n_1-1}v_{ik}\right)=\sum_{i\neq 1}\m_{ij}v_{in_1}
\end{align*}
which means that $v_1=\sum_{i\neq 1}\left(\sum_{j=1}^s\m_{ij}\right)v_i$.
But this is a contradiction since $v_1$ is a vertex of $P_1$.
\end{proof}

As an immediate corollary, we obtain:
\begin{coro}\label{C:vertex} Retain notations from Theorem~\ref{T:st-decomposition}. Let $\mathcal V_i$ denote the set of vertices of $\st_m(I_{X_i}\cap k[x_{n_{i-1}},\dots,x_{n_i}])$, $i = 1, \dots, \ell$. Then the vertices of $\st_m(I_X)$ are precisely
\[
\left\{ \left. \tau + \sum_{i=1}^\ell v_i \, \right| \, v_i \in \mathcal V_i \right\}.
\]
\end{coro}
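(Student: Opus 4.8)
The plan is to read the Corollary off from the decomposition of Theorem~\ref{T:st-decomposition} together with Lemma~\ref{L:vertex}, the only extra ingredient being an induction on the length $\ell$ of the chain. Writing $P_i := \cP_m(I_{X_i}\cap k[x_{n_{i-1}},\dots,x_{n_i}])$, Theorem~\ref{T:st-decomposition} gives $\cP_m(I_X) = \tau + \sum_{i=1}^\ell P_i$ with $\tau$ a single point. Since translation by the fixed vector $\tau$ is a bijection of $\bR^{n+1}$ carrying vertices to vertices, it suffices to prove that the vertices of the Minkowski sum $\sum_{i=1}^\ell P_i$ are exactly the sums $\sum_{i=1}^\ell v_i$ with $v_i\in\mathcal V_i$; adding $\tau$ then produces the stated list.

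Next I would record the two geometric features of the summands that feed into Lemma~\ref{L:vertex}. First, every monomial of $S_m$ has degree $m$, so each character occurring in $\Xi_{[I]_m}$ has coordinate sum equal to $m$ times the number of monomials involved; hence each $P_i$ lies in an affine subspace on which $\sum_j a_j$ is constant. Second, by the coordinate hypothesis $(\dagger)$ the polytope $P_i$ is supported only on the coordinate block indexed by $n_{i-1},\dots,n_i$, and consecutive blocks overlap in the single index $n_i$. Thus $P_i$ and $P_{i+1}$ sit in affine subspaces sharing exactly the one coordinate axis $x_{n_i}$ and each cut out by a constant-coordinate-sum relation on its block, which is precisely the configuration of $H_1,H_2$ analyzed in the proof of Lemma~\ref{L:vertex}.

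The case $\ell=2$ is then immediate from Lemma~\ref{L:vertex}. For the inductive step I would set $Q_{\ell-1} := \sum_{i=1}^{\ell-1} P_i$ and observe that its affine hull is supported on the union $\{0,\dots,n_{\ell-1}\}$ of the supports of $P_1,\dots,P_{\ell-1}$ and lies in a single constant-coordinate-sum subspace, a Minkowski sum of pieces of parallel such subspaces lying again in one. Consequently $Q_{\ell-1}$ and $P_\ell$ occupy exactly the configuration of Lemma~\ref{L:vertex} (with the shared axis now $x_{n_{\ell-1}}$), so the vertices of $Q_{\ell-1}+P_\ell=\sum_{i=1}^\ell P_i$ are the sums of a vertex of $Q_{\ell-1}$ and a vertex of $P_\ell$. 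Invoking the inductive hypothesis that the vertices of $Q_{\ell-1}$ are precisely the $\sum_{i=1}^{\ell-1} v_i$ with $v_i\in\mathcal V_i$ closes the induction.

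The step I expect to require the most care is this inductive verification that the partial sum $Q_{\ell-1}$ still meets the hypothesis of Lemma~\ref{L:vertex} against $P_\ell$: one must check both that forming the Minkowski sum does not enlarge the coordinate support beyond $\{0,\dots,n_{\ell-1}\}$ and that it does not destroy the constant-degree relation, since it is exactly these two facts — support confined to a growing initial block of coordinates, together with a fixed coordinate sum — that keep the overlap with the next summand equal to the single axis $x_{n_{\ell-1}}$. Once these are in place, everything else is a direct application of the quoted results.
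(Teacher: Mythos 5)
Your proof is correct and takes essentially the same route as the paper: translate away the point $\tau$, settle $\ell=2$ by applying Lemma~\ref{L:vertex} to the two coordinate-block affine subspaces meeting along the single axis $x_{n_1}$, and induct on $\ell$. The only difference is that you spell out the inductive step --- verifying that the partial Minkowski sum $\sum_{i=1}^{\ell-1}P_i$ still lies in a constant-coordinate-sum subspace supported on the initial block $\{0,\dots,n_{\ell-1}\}$, so that Lemma~\ref{L:vertex} applies again against $P_\ell$ --- which the paper compresses into ``the general, $\ell \ge 2$ case follows by an induction.''
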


\begin{proof} The $\ell = 2$ case follows from Lemma~\ref{L:vertex}: The state polytopes $\st_m(I_{X_1}\cap k[x_0, x_1, \dots,x_{n_1}])$ and $\st_m(I_{X_2}\cap k[x_{n_1},x_{n_1+1}\dots,x_n])$ are in the affine hyperplanes
\[
H_1 = \left\{ {\bf x} \in \bR^n \, | \, \sum_{i=0}^{n_1} x_i = Q_1(m), \, x_i = 0, \, \forall i > n_1\right\}
\]
and
\[
H_2 = \left\{ {\bf x} \in \bR^n \, | \, \sum_{i=n_1}^{n} x_i = Q_2(m), \, x_i = 0, \, \forall i < n_1\right\}
\]
respectively, where $Q_1(m) = \dim I_{X_1}\cap k[x_0, x_1, \dots,x_{n_1}]_m$ and $Q_2(m) = \dim I_{X_2}\cap k[x_{n_1},x_{n_1+1}\dots,x_n]_m$. Since $H_1 \cap H_2$ is one dimensional, Lemma~\ref{L:vertex} applies.
The general, $\ell \ge 2$ case follows by an induction.
\end{proof}

\section{Decomposition of Hilbert-Mumford index}\label{S:HMindex}

  Retain the notations from Section~\ref{S:proof-main}.
  To prove Proposition~\ref{P:mu-decomposition},  as in \cite{HyK} we shall assume that $\ell = 2$ as the general case follows by a simple induction.
  We let $Q(m) = \dim (I_X)_m$ and $P(m) = \dim (S/I_X)_m = \dim S_m - Q(m)$. Likewise, $Q_i(m) = \dim (I_{X_i})_m$ and $P_i(m) = \dim S^mV_i - Q_i(m)$.
   Let $\rho$
be a 1-PS of $R$ and let $\rho_i$ be the induced 1-PS of $R_i$, $i = 1,2$. These are obtained by composing with the projections $R \to R_i$.
Recall that, if the sum of the $\rho$-weights is zero,  the Hilbert-Mumford index is given by
\[
\mu([I_X]_m, \rho) = \max\{-\langle\chi, \rho\rangle\, | \, \chi\in \Xi_{[I_X]_m}(R) \}
\]
where $\langle \, \, , \, \, \rangle$ denotes the natural paring of the character group and the 1-PS group i.e. $\chi\circ \rho(t) = t^{\langle \chi, \rho \rangle}$ for any $t \in \bG_m(k)$.

For any $\chi\in \Xi_{[I_X]_m}(R)$, due to (\ref{E:states-hilbert}), we have $\chi = \chi_1 + \chi_2 + \tau$, $\chi_i = \iota_i \circ \chi$ where $\iota_i :  R_i \to R$ is the inclusion. And $\tau$ is the character with which $\rho$ acts on $\bigwedge^{\textrm{max}} \left(I \cap \sum_{i+j = m, ij\ne 0} S^iV_1\otimes S^jV_2\right) = T_m$ where $T = \langle x_0,\dots, x_{\ell-1}\rangle \langle x_{\ell+1},\dots,x_n\rangle$.

Hence we have
\[
\begin{array}{lcr}
\langle\chi, \rho\rangle & = & \langle\chi_1,\rho_1\rangle +  \langle\chi_2, \rho_2\rangle + \langle\tau, \rho\rangle.\\
\end{array}
\]
Clearly, the minimum of $\langle\chi, \rho\rangle$ is achieved precisely when each $\chi_i$ pairs minimally with $\rho$. Let $\rho'$ be the 1-ps of $\SL(V)$ associated to $\rho$ i.e. if $r_i$ are the weights of $\rho$, then $\rho'$ is the 1-ps with weights $r_i - w$ where $w$ is the average of the weight $\frac1{\dim V} \sum_{i=1}^n r_i$. Conflating a 1-ps with its weight vector, we may write $\rho = \rho' + (w,w,\dots,w)$.

The minimum of $\langle \chi, \rho \rangle$ is achieved by
\[
\sum_{x^\a \in \left(\ii_{\prec_\rho}I_X\right)_m} \chi^\a
\]
where we used the multiplicative multi-vector notation $\chi^\a = \prod \chi_i^{\a_i}$ as in the discussion preceding Theorem~\ref{T:BM}. Note that
\[
\begin{array}{cll}
\langle \chi, \rho \rangle & = & \langle \chi, \rho' \rangle + \langle \chi, (w,w,\dots,w) \rangle \\
& = & - \mu([I_X]_m, \rho) + w \sum_{x^\a \in \left(\ii_{\prec_\rho}I_X\right)_m}(\sum_{i=0}^n \a_i )\\
& = & - \mu([I_X]_m, \rho) + w m Q(m).
\end{array}
\]
Similarly, let $\rho_i'$ denote the 1-ps of $\SL(V_i)$ associated to $\rho_i$, $i = 1, 2$, whose weights are shifted by the  average weight $w_i = \frac{\sum_{x_j \in V_i} \mathrm{wt}_\rho(x_j)}{\dim V_i}$. Clearly, $\chi_i$ pairs with $\rho_i$ minimally if and only if it pairs with $\rho_i'$ minimally, and
\[
\min \langle \chi_i, \rho_i\rangle = -\mu([I_{X_i}\cap S^mV_i]_m, \rho_i) + w_i m Q_i(m).
\]
Hence we have
\begin{equation}
\begin{array}{cll}
\mu([I_X]_m, \rho) & = &-\min \langle \chi_1, \rho_1 \rangle - \min \langle \chi_2, \rho_2 \rangle - \langle \tau, \rho \rangle + mwQ(m) \\
& =  &\mu([I_{X_1}\cap k[x_0,\dots,x_\ell]]_m,\rho_1) + \mu([I_{X_2}\cap k[x_\ell,\dots,x_n]]_m,\rho_2) \\
&  & - w_1 m Q_1(m) - w_2 mQ_2(m) - \sum_{x^\a \in T_m}\mathrm{wt}_\rho(x^\a) + w m  Q(m).
\end{array}
\end{equation}
Substitute $\dim S^mV - Q(m) = P(m)$ and $\dim S^mV_i - P_i(m) = Q_i(m)$. And  subsequently, substitute   $\sum_{x^\a \in S^mV} \mathrm{wt}_\rho(x^\a)$ for $m w \dim S^mV$ and $\sum_{x^\a \in S^mV_i} \mathrm{wt}_\rho(x^\a)$ for $m w_i \dim S^mV_i$. Then we get
\begin{equation}\label{E:mu-decomp}
\begin{array}{cll}
\mu([I_X]_m, \rho) & = &\mu([I_{X_1}\cap k[x_0,\dots,x_\ell]]_m,\rho_1) + \mu([I_{X_2}\cap k[x_\ell,\dots,x_n]]_m,\rho_2) \\
&  & + w_1 m P_1(m) + w_2 m P_2(m) + \mathrm{wt}_\rho(x_\ell^m) - w m P(m)
\end{array}\tag{$\ddagger$}.
\end{equation}
since $S^mV_1 \coprod (S^mV_2 \setminus \{x_\ell^m\}) \coprod  T_m = S^m V$.   Recall from the  closing moment of the introduction that $[I_X]_m$ and $[I_X]_m^*$ have the same Hilbert-Mumford indices, and that $\rho$ acts with opposite weights on $\bP(\bigwedge S_m^*)$ in which the dual Hilbert points live. That is, if $w^*$ is the average of the weights for the $\rho$ action on $\bP(\bigwedge S_m^*)$, then $w* = -w$ so that the signs of the terms $w m P(m)$, $w_i m P_i(m)$ are reversed.    Hence (\ref{E:mu-decomp}) is precisely the assertion of Proposition~\ref{P:mu-decomposition} for the case $\ell = 1$.

\end{document}